\numberwithin{equation}{section}
\theoremstyle{plain}
\newtheorem{theorem}{Theorem}[section]
\newtheorem{lemma}[theorem]{Lemma}
\theoremstyle{definition}
\newtheorem{definition}[theorem]{Definition}
\newtheorem{example}[theorem]{Example}
\theoremstyle{remark}
\newtheorem{remark}[theorem]{Remark}
\par\begin{tabular}{rcl}}%
\newcommand*\slot{\mathpalette\slot@{.5}}
\newcommand*\slot@[2]{\mathbin{\vcenter{\hbox{\scalebox{#2}{$\m@th#1\bullet$}}}}}
\newcommand{\lie}{\mathfrak}
\newcommand{\CC}{\mathbb{C}}
\newcommand{\DD}{\mathbb{D}}
\newcommand{\NN}{\mathbb{N}}
\newcommand{\RR}{\mathbb{R}}
\newcommand{\ZZ}{\mathbb{Z}}
\newcommand{\cA}{\mathcal{A}}
\newcommand{\cB}{\mathcal{B}}
\newcommand{\cK}{\mathcal{K}}
\newcommand{\cQ}{\mathcal{Q}}
\newcommand{\cU}{\mathcal{U}}
\newcommand{\cX}{\mathcal{X}}
\newcommand{\SU}{\mathrm{SU}}
\newcommand{\SO}{\mathrm{SO}}
\newcommand{\CCGq}{\CC[G_q]}
\newcommand{\CCXq}{\CC[\cX_q]}
\newcommand{\CCXp}{\CC[\cX_p]}
\newcommand{\CCDq}{\CC[\overline\DD_q]}
\newcommand{\CCDqTn}{\CC[\overline\DD_q/T_n]}
\newcommand{\leaf}{F}
\DeclareMathOperator{\Vect}{span}
\DeclareMathOperator{\CommSp}{CommSp}
\begin{document}

\title{Algebraic isomorphisms of quantized homogeneous spaces}
\author{Robert Yuncken}
\thanks{This research was supported by ANR project OpART (ANR-23-CE40-0016), and by COST Action CaLISTA CA21109 (European Cooperation in Science and Technology),  \href{https://www.cost.eu}{www.cost.eu}. 
The author is grateful for the support and hospitality of the Sydney Mathematics Research Institute and the University of Wollongong, where this article was written.}

\address{  Université de Lorraine, CNRS, IECL, F-57000 Metz, France}
\email{robert.yuncken@univ-lorraine.fr}

\subjclass{Primary: 20G42, Secondary: 46L67, 17B37}
\keywords{quantum groups, quantized homogeneous spaces, quantized function algebras, Poisson-Lie groups.}

\date{\today}
\maketitle

\begin{abstract}
We describe a proof of the following folklore theorem:  If $\cX = G/K$ is the homogeneous space of a simply connected compact semisimple Lie group with Poisson-Lie stabilizers, then the $q$-deformed algebras of regular functions $\CC[\cX_q]$ with $0<q\leq1$ are mutually non-isomorphic as $*$-algebras.
\end{abstract}

\section{Introduction}

Fix $G$ a compact simply connected semisimple Lie group and $K$ a proper closed Poisson-Lie subgroup.  The homogeneous space $\cX = G/K$ admits a family of $q$-deformations, $0<q<\infty$, realized via their $*$-algebras of regular functions $\CCXq$.

The following theorem is folklore.

\begin{theorem}
  \label{theorem}
  $\CCXp \cong \CCXq$ if and only if $p=q^{\pm1}$.
\end{theorem}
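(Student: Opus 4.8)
The plan is to treat the two implications separately, with the forward direction routine and the converse carrying all the weight.

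For the ``if'' direction it suffices that a well-known isomorphism $U_q\lie g\cong U_{q^{-1}}\lie g$ (e.g.\ $E_i\mapsto F_i$, $F_i\mapsto E_i$, $K_i\mapsto K_i^{-1}$, possibly composed with the coproduct flip) induces a Hopf $*$-algebra isomorphism $\CC[G_q]\cong\CC[G_{q^{-1}}]$. Since the Poisson--Lie subgroup $K$ is cut out by root-combinatorial data (up to the usual covers, a subset of the simple roots) which is stable under this automorphism, $\CC[K_q]$ is carried to $\CC[K_{q^{-1}}]$, and passing to the subalgebras of $\CC[K]$-coinvariants yields $\CCXq\cong\CC[\cX_{q^{-1}}]$.

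For the converse, let $\Phi\colon\CCXp\xrightarrow{\ \sim\ }\CCXq$ be a $*$-isomorphism; using the ``if'' direction we may assume $0<p,q\le 1$, and since $\CC[\cX_1]$ is commutative whereas $\CCXq$ for $q\ne 1$ is not (it admits infinite-dimensional irreducible $*$-representations, $K$ being proper) we may further assume $0<p,q<1$ and must prove $p=q$. The first step is a reduction to the quantum disc. A $*$-isomorphism transports the lattice of $*$-ideals, hence the primitive ideal space with its hull--kernel topology; by the Soibelman-type classification of the irreducible $*$-representations of $\CCXq$ (which mirrors the stratification of $\cX=G/K$ by symplectic leaves) this space carries a canonical partial order, and its minimal non-trivial strata --- the ``atoms'' lying just above the classical points --- are exactly the distinguished representations built from a single quantum $\SU(2)$; each such atom realises a quantum-disc algebra $\CC[\overline\DD_{q^{c}}]$ for a fixed $q$-independent exponent $c>0$ (up to a $q$-independent torus factor, the $\CCDqTn$ of the setup) as a primitive quotient of $\CCXq$. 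As $\Phi$ matches atoms to atoms, it descends to a $*$-isomorphism $\CC[\overline\DD_{q^{c_i}}]\cong\CC[\overline\DD_{p^{c_j}}]$ for each matched pair; granting the disc case below, each such isomorphism forces $q^{c_i}=p^{\pm c_j}$, and multiplying these identities around the induced permutation of atoms gives $|\log p|=|\log q|$, i.e.\ $p=q$.

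It remains to show $\CC[\overline\DD_p]\cong\CC[\overline\DD_q]\Rightarrow p=q^{\pm1}$. Here $\CC[\overline\DD_q]$ has, besides a circle of characters, a \emph{unique} infinite-dimensional irreducible $*$-representation $\pi_q$ (on $\ell^2(\NN)$, with $z_q$ a weighted backward shift of weights $\sqrt{1-q^{2n}}$), so $\pi_q\circ\Phi\cong\pi_p$, and one may realise both on a single $\ell^2(\NN)$ in which $\Phi(z_p)$ is the standard weighted shift of weights $\sqrt{1-p^{2n}}$; then $\Phi(\Delta_p)=1-\Phi(z_p)\Phi(z_p)^*$ is the positive operator with simple spectrum $\{p^{2n}:n\ge 0\}$, and it lies in the concrete algebra $\pi_q(\CC[\overline\DD_q])$. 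The key structural input is the description of this algebra obtained from the PBW basis $\{z_q^a\Delta_q^k(z_q^*)^b\}$ (with $\Delta_q=1-z_qz_q^*$, $\Delta_q z_q=q^2 z_q\Delta_q$, $z_q^*z_q=1-q^2\Delta_q$): it consists precisely of the finite-band operators whose $s$-th diagonal ($s\ge 0$) has the form $\prod_{j=1}^{s}\sqrt{1-q^{2(n+j)}}$ times a polynomial in $q^{2n}$, the polynomial having zero constant term exactly on the ideal $\bigcap_\chi\ker\chi$ in which both $\Delta_q$ and $\Phi(\Delta_p)$ sit. Feeding $\Phi(\Delta_p)$ and its powers into this description, comparing operator traces $\Tr\Phi(\Delta_p)^r=(1-p^{2r})^{-1}$ with the corresponding sums $\sum_n(\text{polynomial})(q^{2n})$, and invoking the linear independence of the sequences $n\mapsto(q^{2i})^n$, forces $p^2\in\{q^{2k}:k\ge 1\}$; applying the same to $\Phi^{-1}$ gives $q^2\in\{p^{2\ell}:\ell\ge 1\}$, hence $k\ell=1$ and $p=q$.

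The routine parts are the ``if'' direction and the transport of the primitive-ideal stratification; the real work is the disc case. The delicate point there is that $\Phi$ need not match the canonical orthonormal bases of $\pi_p$ and $\pi_q$, so one cannot simply read off a normal form: the argument must genuinely exploit the band/polynomial-symbol description of $\pi_q(\CC[\overline\DD_q])$ together with the spectral data of $\Phi(\Delta_p)$. That no softer invariant can work is clear from the fact that the $C^*$-completions $C(\overline\DD_q)$ are all isomorphic to the Toeplitz algebra, and that $C(\cX_q)$ and its $K$-theory are $q$-independent; the parameter survives only at the level of the polynomial $*$-algebra.
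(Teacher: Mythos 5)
Your overall architecture (reduce to $0<p,q<1$, locate the ``smallest'' non-classical irreducible representations as an isomorphism invariant, extract the deformation parameter from the resulting quotient algebras) parallels the paper, but two steps carry genuine gaps. First, the reduction: the quotient of $\CCXq$ by the kernel of a simple-reflection representation $\pi_{s_i,t}$ is \emph{not} a quantum disc algebra ``up to a torus factor''; it is $\rho_{q_i}(\CC[\SU_{q_i}(2)/T_{n_i}])=\CC[\overline\DD_{q_i}/T_{n_i}]$, the \emph{subalgebra} of the disc algebra spanned by monomials whose degrees satisfy a congruence modulo $n_i$. For $n_i=\infty$ (standard Podle\'s sphere case) this subalgebra contains no element playing the role of your generator $z$ at all, so your disc lemma --- whose proof hinges on $z$ acting as a weighted shift in the unique infinite-dimensional irreducible representation --- does not apply to the algebras that actually arise, and the conclusion $q^{c_i}=p^{\pm c_j}$ is not of the right shape: the parity of $n_i$ matters (for these subalgebras only $q_i^2$, not $q_i$, is visible, which is exactly the content of the paper's Lemma \ref{lem:Q} distinguishing $q^\ZZ$ from $q^{2\ZZ}$). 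The ``multiply around the permutation'' step can be repaired because the data $(q_i,n_i)$ is the same on both sides, but you would need a version of the disc lemma for every $\CC[\overline\DD_q/T_n]$. In addition, identifying the ``atoms'' of the primitive ideal space with the simple-reflection representations is not routine transport: one must rule out the cells of Bruhat length $\geq 2$, which is where the paper invokes a real input (\cite[Theorem 4.1]{NesTus:functions}, via the intrinsic ``compact-by-abelian'' characterization of $2$-cell representations); your appeal to the leaf stratification asserts rather than proves this.

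Second, the key analytic step of your disc case is not established. Granting the band/diagonal description of $\rho_q(\CCDq)$, the trace identities you propose read, for each $r$, $\frac{1}{1-p^{2r}}=\sum_k a_{r,k}\frac{1}{1-q^{2k}}$ with finitely many \emph{real} coefficients $a_{r,k}$ (the coefficients of the diagonal polynomial of $\Phi(\Delta_p)^r$). These coefficients are not under your control and nothing in your argument relates them across different $r$, so ``linear independence of $n\mapsto q^{2in}$'' does not force $p^2\in q^{2\NN}$: each single identity is solvable for arbitrary $p$. To close the disc case you must use the algebra structure rather than traces; the cheapest correct route is the relation $\Phi(\Delta_p)\Phi(z_p)=p^2\,\Phi(z_p)\Phi(\Delta_p)$, which places $p^2$ in $\CommSp(\CCDq)$, i.e.\ precisely the commutator-spectrum invariant the paper uses --- and then the greatest element $<1$ of that spectrum (computed monomial-by-monomial as in Lemma \ref{lem:Q}) both pins down the exponent and handles the $T_n$-invariant subalgebras uniformly, which is what makes the paper's argument close in one line.
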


In this expository note, we will give a proof of this fact.  The ideas are mostly not original.  A proof for the quantum spheres $\cX_q = S^{2n+1}_q$ was given by D'Andrea \cite{DAndrea:quantum_spheres}.  Krähmer \cite{Krahmer:Podles_spheres} proved a similar result for the nonstandard Podle\'{s} spheres.  

In the present context, the algebras $\CCGq$ are due to Soibelman \cite{Soibelman}, see also \cite{Woronowicz:pseudogroups}.  The algebras $\CCXq$ were introduced over the following years, but a systematic study of their structure was undertaken by Stokman and Dijkhuizen \cite{StoDij,Stokman:quantum_orbit_method}.  This was further developed by Neshveyev and Tuset  in \cite{NesTus:functions}. The essential points necessary for proving Theorem \ref{theorem} are already contained in the papers just listed.  

Giselsson \cite{Giselsson} proved that the enveloping $C^*$-algebras $C(\cX_q)$ are all isomorphic for $q\in(0,\infty)\setminus\{1\}$.   In other words, the quantum spaces $\cX_q$  $(0<q<1)$ are isomorphic as noncommutative topological spaces but not as noncommutative algebraic varieties.

\subsection*{Acknowledgements}

It is a pleasure to thank Ulrich Krähmer for discussions during a common visit to the University of Wollongong.

\section{Preliminaries}
This article is not completely self-contained.  We will make heavy use of the notation and results of \cite{NesTus:functions}, especially Sections 1, 2 and the beginning of Section 3.  

The isomorphism $\CCXq \cong \CC[\cX_{q^{-1}}]$ is well-known, \emph{cf.} \cite[Lemma 2.4.2]{NesTus:book}.  Also,  $\CCXq$ is commutative if and only if $q=1$.  We therefore work throughout with $0<p,q<1$.

\section{$\SU_q(2)$ and the quantum disk}

The algebra $\CC[\SU_q(2)]$ is the universal $*$-algebra generated by two elements $\alpha$ and $\gamma$ satisfying
\begin{equation}
\label{eq:SUq2}
\begin{array}{c}
  \alpha\gamma = q\gamma\alpha, \quad
  \alpha\gamma^* = q\gamma^*\alpha, \quad
  \gamma\gamma^* = \gamma^*\gamma \\
  \alpha^*\alpha + \gamma^*\gamma = 1 = \alpha\alpha^* + q^2 \gamma^*\gamma.
\end{array}
\end{equation}
The $*$-subalgebra $\CC[\gamma,\gamma^*]$ is a polynomial algebra in $2$ commuting conjugate variables.  We use the notation
\[
  \alpha^{(i)} = 
  \begin{cases}
    \alpha^i, & \text{if }i\geq0,\\
    (\alpha^{*})^{-i}, & \text{if } i<0.
  \end{cases}
\]
Putting
\(
 \cA_i = \alpha^{(i)} \CC[\gamma,\gamma^*] = \CC[\gamma,\gamma^*]\alpha^{(i)}.
\) 
we get an algebra grading
\begin{equation}
\label{eq:grading}
  \CC[\SU_q(2)] = \bigoplus_{i\in\ZZ} \cA_i.
\end{equation}

The standard irreducible representation $\rho_q$ of $\CC[\SU_q(2)]$ on $\cB(\ell^2(\ZZ_+))$ is given in the basis $(e_n)_{n\in\ZZ_+}$ by 
\begin{equation}
\label{eq:Soibelman}
  \rho_q(\alpha) : e_n \mapsto \sqrt{1-q^{2n}}\,e_{n-1} , \quad
  \rho_q(\gamma) : e_n \mapsto -q^n e_n.
\end{equation}

\begin{lemma}
The kernel of $\rho_q$ is the ideal generated by $\gamma-\gamma^*$.
\end{lemma}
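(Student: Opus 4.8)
The plan is to identify $\ker\rho_q$ explicitly by first showing the stated ideal is contained in the kernel, and then showing the quotient by that ideal is already as small as $\rho_q(\CC[\SU_q(2)])$, so no larger ideal can be the kernel. For the inclusion $(\gamma-\gamma^*)\subseteq\ker\rho_q$: from \eqref{eq:Soibelman} we see $\rho_q(\gamma)e_n=-q^ne_n=\rho_q(\gamma^*)e_n$ since the eigenvalues are real, so $\rho_q(\gamma-\gamma^*)=0$ immediately, and since $\ker\rho_q$ is a two-sided $*$-ideal it contains the whole ideal $I$ generated by $\gamma-\gamma^*$.

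For the reverse inclusion, the strategy is to understand the quotient algebra $\CC[\SU_q(2)]/I$ and check it injects into $\cB(\ell^2(\ZZ_+))$ via the induced map $\bar\rho_q$. In the quotient we have $\gamma=\gamma^*$ central-looking relations: the defining relations \eqref{eq:SUq2} collapse to $\alpha\gamma=q\gamma\alpha$, $\gamma=\gamma^*$, and $\alpha^*\alpha+\gamma^2=1=\alpha\alpha^*+q^2\gamma^2$. This presents $\CC[\SU_q(2)]/I$ as (a quotient of) the coordinate algebra of the quantum disk. Using the grading \eqref{eq:grading}, I would argue that $\{\alpha^{(i)}\gamma^k : i\in\ZZ,\ k\geq 0\}$ spans $\CC[\SU_q(2)]/I$, and then show these images are linearly independent in $\cB(\ell^2(\ZZ_+))$ under $\bar\rho_q$ by examining the action on basis vectors: $\rho_q(\alpha^{(i)}\gamma^k)$ shifts $e_n\mapsto e_{n-i}$ (up to scalars) and acts diagonally through the $\gamma^k$ factor with distinct "weights", so a vanishing linear combination forces all coefficients to vanish by separating homogeneous components of the grading and then using that the diagonal operators $\rho_q(\gamma)^k$, $k\geq 0$, restricted to a fixed shift sector are linearly independent (their matrix entries are $(-q^n)^k$, a Vandermonde-type argument in the variable $q^n$).

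The main obstacle is controlling the quotient algebra $\CC[\SU_q(2)]/I$ tightly enough — specifically, proving that the monomials $\alpha^{(i)}\gamma^k$ really do span it and that no further collapse occurs beyond what the defining relations force. This requires showing that passing to the quotient does not merge the graded pieces $\cA_i$ in an unexpected way; the grading \eqref{eq:grading} is respected by $I$ (since $\gamma-\gamma^*\in\cA_0$), so $\CC[\SU_q(2)]/I=\bigoplus_i \cA_i/(I\cap\cA_i)$, and on each piece the computation reduces to the commutative polynomial structure of $\CC[\gamma,\gamma^*]$ modulo $\gamma-\gamma^*$. Once the spanning set is pinned down, the linear-independence step in $\cB(\ell^2(\ZZ_+))$ is the Vandermonde argument sketched above and is routine. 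Combining: $\ker\bar\rho_q=0$, hence $\ker\rho_q=I$.
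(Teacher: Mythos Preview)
Your argument is correct and rests on the same two ideas as the paper's proof: separate into graded components via \eqref{eq:grading} (which $\rho_q$ sends to shifts of distinct degrees), and then use that $\rho_q(\gamma)$ has infinitely many distinct real eigenvalues to kill the remaining polynomial ambiguity. The paper packages this slightly more directly: rather than first passing to the quotient, exhibiting the spanning set $\{\alpha^{(i)}\gamma^k\}$, and running a Vandermonde argument on the images, it starts from an arbitrary $a=\sum_i c_i\alpha^{(i)}\in\ker\rho_q$, multiplies the $i$th component by $\alpha^{(i)*}$ to reduce to $\rho_q(c_i)=0$ with $c_i\in\CC[\gamma,\gamma^*]$, and then observes that a two-variable polynomial $c_i(Z,\overline Z)$ vanishing on the Zariski-dense subset $\{-q^n:n\geq0\}$ of the real line must be divisible by $Z-\overline Z$, i.e.\ by $\gamma-\gamma^*$. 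Your Vandermonde step is exactly this divisibility statement viewed in the quotient, so the two proofs are equivalent in content; the paper's version just avoids having to argue that $\{\alpha^{(i)}\gamma^k\}$ spans the quotient.
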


\begin{proof}
Let $a\in\ker(\rho_q)$.  Write $a=\sum_i c_i \alpha^{(i)} \in \bigoplus_i \cA_i$ where $c_i \in \CC[\gamma,\gamma^*]$.  Since $\rho_q$ maps $\cA_i$ to weighted shifts of degree $i$, we have   $\rho_q(c_i\alpha^{(i)})=0$ for each $i$.  

Let $i\geq 0$.  We have $\rho_q(c_i \alpha^{(i)}\alpha^{(i)*}) = 0$.  Since $\rho_q(\alpha^{(i)}\alpha^{(i)*})$ is diagonal with no zero entries, we get $\rho_q(c_i) = 0$.  The spectrum of $\rho_q(\gamma)= \rho_q(\gamma^*)$ is $-q^{\ZZ_+}$ which is Zariski dense in $\RR$.  So if we view $c_i $ as a polynomial in $\CC[\gamma,\gamma^*] \cong \CC[Z,\overline{Z}]$, it is zero on the real line and hence divisible by $\gamma-\gamma^*$.

For $i<0$, we can take the adjoint to reduce to the previous case.  
\end{proof}

We define $\CCDq = \CC[\SU_q(2)]/\langle \gamma-\gamma^* \rangle \cong \rho_q(\CC[\SU_q(2)])$ and let $\leaf_q : \CC[\SU_q(2)] \to \CCDq$ denote the quotient map.  Then $\CCDq$ has generators
\begin{equation}
\label{eq:leaf}
  y=\leaf_q(\gamma), \quad z = \leaf_q(\alpha)^*
\end{equation}
and relations
\begin{equation}
\label{eq:CCDq}
  yz = qzy, \quad y=y^*, \quad zz^*+y^2 = 1 = z^*z + q^2 y^2.
\end{equation}
In Poisson language, $\leaf_q$ is the restriction of polynomials on $\SU_q(2)$ to a certain quantized symplectic leaf.  

\begin{remark}
The $*$-algebra $\CCDq$ embeds into the $C^*$-algebra of the quantum disk $C(\overline\DD_q)$ of \cite[\S{}3]{NesTus:functions} via the homomorphism
\begin{equation}
\label{eq:disk}
  z \mapsto Z_q, \quad  y \mapsto -(1-Z_qZ_q^*)^{\frac12}.
\end{equation}
This is not an algebraic map, which is why we add the generator $y$.  
\end{remark}

\section{Homogeneous spaces for $\SU_q(2)$}

Write $T$ for the diagonal torus in $\SU(2)$ .  For $n\in\{1,2,\ldots,\}$, let $T_n$ denote the closed subgroup of $T$ generated by the $n$th roots of unity.  We also use the convention $T_\infty = T$.  
The associated quantum homogeneous spaces  \cite[\S{}2]{NesTus:functions} are given by the quantized function algebras
\begin{equation}
\label{eq:CCDqTn}
  \CC[\SU_q(2)/T_n] = \Vect\{ \alpha^{(i)} \gamma^j \gamma^{*k} \mid i+j-k \in n\ZZ \}  \subseteq \CC[\SU_q(2)] ,
\end{equation}
where we use the convention $\infty\ZZ = \{0\}$.    Explicitly $\SU_q(2)/T_n$ is $\SU_q(2)$ when $n=1$, $\SO_q(3)$ when $n=2$ \cite{Podles:symmetries}, the quantum lens space $L^3_q(n,1)$ when $3 \leq n < \infty$ \cite{HonSzy:lens} and the standard Podle\'{s} sphere when $n=\infty$ \cite{Podles:spheres}.

Let us also write 
\[
\CCDqTn = \leaf_q(\CC[\SU_q(2)/T_n]) \cong \rho_q(\CC[\SU_q(2)/T_n]).  
\]
This is the quantization of a $2$-dimensional symplectic leaf of $\SU_q(2)/T_n$.

\section{Commutator spectrum}

\begin{definition}
\label{def:CommSp}
Define the \emph{commutator spectrum} of an algebra $\cA$ to be 
\[
 \CommSp(\cA) = \{ \omega \in \CC^\times \mid \exists a,b\in\cA\setminus\{0\} \text{ with } ab = \omega ba \}.
\]
\end{definition}

\begin{lemma}
\label{lem:Q}
  The commutator spectrum of $\CCDqTn$ is 
  \[
  \cQ(\CCDqTn) = 
  \begin{cases} 
   q^\ZZ & \text{if $n$ is odd} \\
   q^{2\ZZ} & \text{if $n$ is even or $\infty$}.
  \end{cases}
  \]
\end{lemma}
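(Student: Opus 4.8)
The plan is to work inside the concrete model $\CCDqTn = \rho_q(\CC[\SU_q(2)/T_n])$, which is a $*$-algebra of operators on $\ell^2(\ZZ_+)$ spanned by the images of the monomials $\alpha^{(i)}\gamma^j\gamma^{*k}$ with $i+j-k\in n\ZZ$. Since $\rho_q(\gamma)=\rho_q(\gamma^*)$ is the diagonal operator $\diag(-q^n)_n$ and $\rho_q(\alpha)$ is a weighted unilateral shift, every such monomial maps to an operator of the form (diagonal)$\cdot$(shift of degree $i$), i.e.\ a weighted shift $S_i$ with $S_i e_n = w_n e_{n-i}$ for suitable scalars $w_n$ (with the convention $e_m=0$ for $m<0$). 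I would first record the behaviour of weighted shifts under the relation $ab=\omega ba$: if $a$ is a weighted shift of degree $i$ and $b$ one of degree $j$, then $ab$ and $ba$ are both weighted shifts of degree $i+j$, and comparing their weight sequences will force $\omega$ to be a ratio of two products of the scalars $q$ appearing in the shift data, hence a power of $q$. This gives the inclusion $\CommSp(\CCDqTn)\subseteq q^\ZZ$ in all cases, and one should keep track of parity to get $q^{2\ZZ}$ in the even/$\infty$ case.

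The key structural fact to extract is the grading. Writing $\CCDqTn = \bigoplus_{i} \cB_i$ where $\cB_i$ is spanned by the leaf-monomials of $\alpha$-degree $i$ (those with $i+j-k\in n\ZZ$, $j,k\ge0$), one checks that $i$ ranges over all of $\ZZ$ when $n$ is odd (since $j-k$ can be any integer, one can solve $i\equiv k-j \pmod n$ for every $i$) but only over $2\ZZ$... no: actually over all of $\ZZ$ in every case, but the parity constraint relevant to commutators comes from the structure of each $\cB_i$ rather than which $i$ occur. The honest mechanism is: take $a\in\cB_i$, $b\in\cB_j$ homogeneous; then $ab,ba\in\cB_{i+j}$ and the commutation relation in $\CC[\SU_q(2)]$ gives $ab=q^{?}ba$ up to lower-order diagonal corrections, which in the faithful representation $\rho_q$ can be analysed term by term. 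I would therefore prove the two inclusions separately: (a) $\CommSp\subseteq q^\ZZ$ (resp.\ $q^{2\ZZ}$) by the weighted-shift scalar-ratio argument above, being careful that when $n$ is even or $\infty$ the available monomials only produce even total degree shifts relative to one another; (b) $q^\ZZ\subseteq\CommSp$ (resp.\ $q^{2\ZZ}$) by exhibiting explicit pairs: $\rho_q(\gamma)$ commutes with everything (giving $\omega=1$), and for $\omega=q^m$ one produces $a,b$ as suitable $\rho_q(\alpha^{(i)}\gamma^{*k})$-type monomials lying in $\CC[\SU_q(2)/T_n]$ whose commutator picks up exactly $q^m$; the parity of the achievable $m$ is dictated by the constraint $i+j-k\in n\ZZ$.

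The main obstacle, and the step deserving the most care, is the lower bound (b) in the even/$\infty$ case: one must show that no pair of nonzero elements realizes an \emph{odd} power of $q$. This is not a statement about homogeneous elements only — a priori $a,b$ could be inhomogeneous for the grading — so the argument needs the faithfulness of $\rho_q$ together with the weighted-shift analysis to reduce a general relation $ab=\omega ba$ to its leading (highest- or lowest-index) components and then read off $\omega$ from the scalar ratio, checking that for $n$ even the scalars available always contribute $q$ to an even power. Concretely, I expect to filter by the $\alpha$-degree (the $\ZZ$-grading \eqref{eq:grading} restricted to the leaf), observe that $a\in\cB_i$, $b\in\cB_j$ forces the top-degree parts to satisfy $a_i b_j=\omega b_j a_i$ exactly, and then the polynomial computation in $\CC[\gamma,\gamma^*]$ plus the shift weights $\sqrt{1-q^{2n}}$ pins down $\omega$ as a power of $q$ whose parity matches $i+j$, which modulo $n$ is constrained by $n\ZZ$. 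Once the leading-term reduction is in place, the remaining computations are routine manipulations of \eqref{eq:CCDq} and \eqref{eq:Soibelman}.
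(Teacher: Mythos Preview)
Your overall plan is sound and shares the paper's key first move: use the $\alpha$-degree grading to reduce a relation $ab=\omega ba$ to one between the top homogeneous components. From there, however, the paper takes a cleaner algebraic route than your weighted-shift analysis. Rather than working in $\rho_q$ and tracking weight sequences (which involve the factors $\sqrt{1-q^{2n}}$, not powers of $q$, so your phrase ``a ratio of two products of the scalars $q$ appearing in the shift data'' is not literally correct), the paper observes that for $i\le 0$ the homogeneous piece $\leaf_q(\cA_i)$ lies in the quantum plane $\CC_q[y,z]$, and reduces the mixed-sign case to this one by taking adjoints or replacing $b$ by $a^m b$ for large $m$. In $\CC_q[y,z]$ one has the exact monomial identity $(y^jz^k)(y^{j'}z^{k'})=q^{jk'-j'k}(y^{j'}z^{k'})(y^jz^k)$, and a leading-term argument for the lexicographic order on exponents immediately gives $\CommSp\subseteq q^{\ZZ}$, realized on monomials. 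Your representation-theoretic approach can be made to work, but only after you notice that the $\sqrt{1-q^{2n}}$ factors cancel between $ab$ and $ba$, leaving a ratio of values $p(q^{k'}x)p'(x)/p'(q^kx)p(x)$ of the diagonal polynomial parts, and then extract $\omega$ from leading coefficients --- which is precisely the paper's lex-leading-term step in disguise.

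Two specific slips to fix. First, $\rho_q(\gamma)$ does \emph{not} commute with everything (indeed $\alpha\gamma=q\gamma\alpha$); $\omega=1$ is of course always in the commutator spectrum, but not for that reason. Second, your parity heuristic ``$\omega$ is a power of $q$ whose parity matches $i+j$'' is not the right mechanism. What actually happens (in the paper's variables) is that when $n$ is even or $\infty$, the constraint $i+j-k\in n\ZZ$ forces the $y$- and $z$-exponents of each quantum-plane monomial to have equal parity, whence $jk'-j'k$ is automatically even. For the reverse inclusions the paper simply exhibits explicit monomials in $\CCDqTn$ realizing $\omega=q^2$ (even/$\infty$ case) and $\omega=q$ (odd case), together with the observation that $\CommSp$ is closed under integer powers via $(a,b)\mapsto(a,b^m)$; you should supply such witnesses rather than leave them implicit.
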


\begin{proof}
Consider first the quantum plane algebra  $\CC_q[y,z]$ generated by $y,z$ with $yz=qzy$.  We do not impose a $*$-structure for the moment.  We have
\begin{equation}
\label{eq:power}
  (y^jz^k)(y^{j'}z^{k'}) = q^{jk'-j'k} (y^{j'}z^{k'})(y^{j}z^{k}).
\end{equation}
This shows that $q^\ZZ \subseteq \CommSp[\CC_q[y,z])$.  Moreover, by considering the leading terms for the lexicographical ordering on powers of $y$ and $z$, one can deduce that $\CommSp(\CC_q[y,z]) = q^\ZZ$.  

Now suppose $\omega\in\CommSp(\CCDqTn)$, so we have $a,b\in\CCDqTn$ nonzero with $ab=\omega ba$.  By considering the highest order terms in the grading \eqref{eq:grading}, we can assume without loss of generality that $a\in\leaf_q(\cA_i)$, $b\in\leaf_q(\cA_{i'})$ for some $i,i'$.  If both $i,i'\leq 0$  then $a,b\in\CC_q[y,z]$ so $\omega\in q^\ZZ$.  If both $i,i'>0$ we can reduce to the previous case by taking adjoints.  Finally, if $i<0$ and $i'>0$ then we can reduce to the previous case by replacing $b$ by $a^mb$ for some sufficiently large $m$.

This proves that $\CommSp(\CCDqTn) \subseteq q^\ZZ$, and also that it is realized on monomials $a=y^jz^k$, $b=y^{j'}z^{k'} \in \CCDqTn$.  Note also that it is closed under taking integer powers, by considering $a$ and $b^m$.

If $n$ is even or $\infty$, then Equation \eqref{eq:CCDqTn} forces $j\equiv k ~[2]$ and $j' \equiv k'~[2]$.  Therefore, by Equation \eqref{eq:power}, $\CommSp(\CCDqTn) \subseteq q^{2\ZZ}$.  Putting $a=\leaf_q(\gamma^*\alpha)  = yz^*$, $b=\leaf_q(\gamma^*\gamma) = y^2$ gives $\omega = q^2$.

If $n$ is odd, we can put $a=\leaf_q(\gamma^{n+1}\alpha^*) = y^{n+1}z$,  $b=\leaf_q(\gamma^{\frac{n+1}{2}} \gamma^{*\frac{n-1}{2}}\alpha^*) = y^{n}z$ and we get $\omega = q$.   
\end{proof}

\section{Quantized homogeneous spaces}

Let $\Pi = \{\alpha_1,\ldots,\alpha_r\}$ be the simple roots of $G$ and $\{\omega_1,\ldots,\omega_r\}$  the associated fundamental weights.  We write $(\slot,\slot)$ for the $G$-invariant bilinear form on $\lie{h}^*$ with $(\alpha,\alpha)=2$ for all short roots $\alpha$, and we use the standard notation $q_i = q^{(\alpha_i,\alpha_i)/2}$.  Thus $q_i = q$, $q^2$ or $q^3$ depending on the length of $\alpha_i$.  
For every $\alpha_i\in\Pi$, there is a restriction map $\sigma_i : \CCGq \to \CC[\SU_{q_i}(2)]$.

The Poisson-Lie subgroups of $G$ are determined by pairs $(S,L)$ where $S$ is a subset of the simple roots for $G$ and $L$ is a subgroup of the lattice $P(S^c) = \ZZ \{ \omega_i \mid \alpha_i \in S^c \}$.  
We refer to \cite[Proposition 1.1]{NesTus:functions} for the details.
Fix such a subgroup $K^{S,L}$ and consider $\cX = G/K^{S,L}$ and its quantized algebra of function $\CCXq$.

\begin{lemma}
\label{lem:restrictions}
For every $\alpha_i\in S^c$ we have $\sigma_i(\CCXq) = \CC[\SU_{q_i}(2)/T_{n_i}]$ for some $n_i \in \{1,2,\ldots,\infty\}$.
\end{lemma}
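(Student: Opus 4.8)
The plan is to prove the two inclusions in $\sigma_i(\CCXq)=\CC[\SU_{q_i}(2)/T_{n_i}]$ separately, after reading off $n_i$ from the classical geometry, with the surjectivity direction being the real work.

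\emph{The integer $n_i$ and the inclusion ``$\subseteq$''.} Since $\alpha_i\in S^c$, the classical intersection $\SU(2)_{\alpha_i}\cap K^{S,L}$ is a closed subgroup of the coroot circle $\alpha_i^\vee(U(1))$, hence a finite cyclic group or the whole circle; I take this to define $T_{n_i}$, with $n_i\in\{1,2,\ldots,\infty\}$ read off from $(S,L)$ via \cite[Proposition~1.1]{NesTus:functions}. Because the maximal torus is undeformed, restriction to this common finite subgroup gives honest Hopf $*$-algebra surjections $\CCGq\onto\CC[K_q]\onto\CC[T_{n_i}]$ and $\CCGq\xrightarrow{\ \sigma_i\ }\CC[\SU_{q_i}(2)]\onto\CC[T_{n_i}]$ whose composites $\CCGq\to\CC[T_{n_i}]$ agree. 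Recalling that $\sigma_i$ is itself a Hopf $*$-algebra surjection and that $\CCXq$ is the space of $\CC[K_q]$-coinvariants in $\CCGq$, a one-line computation with the coproduct shows $\sigma_i(a)$ is $\CC[T_{n_i}]$-coinvariant for every $a\in\CCXq$; by the defining description \eqref{eq:CCDqTn} this says exactly $\sigma_i(\CCXq)\subseteq\CC[\SU_{q_i}(2)/T_{n_i}]$.

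\emph{The inclusion ``$\supseteq$''.} Here I would use the description of $\CCXq$ from \cite[\S2]{NesTus:functions} as the span of matrix coefficients of the irreducibles $V_\lambda$ paired against $\CC[K_q]$-invariant functionals $\eta\in V_\lambda^*$, as $\lambda$ runs over the dominant weights admitting such $\eta$. Applying $\sigma_i$ decomposes $V_\lambda$ over the quantum subgroup $\SU_{q_i}(2)$, and the key point is: when $\eta$ generates an irreducible $\SU_{q_i}(2)$-submodule of dimension $m+1$ with $\eta$ an extreme-weight vector, the images of the associated matrix coefficients span exactly one ``column'' of matrix coefficients of that submodule, which in the presentation \eqref{eq:SUq2} of $\CC[\SU_{q_i}(2)]$ are (up to normalization) the monomials $\gamma^{j}\alpha^{m-j}$, $0\le j\le m$; since each of these is $\alpha^{(\ell)}\gamma^{j'}\gamma^{*k}$ with $\ell+j'-k=m\in n_i\ZZ$, they lie in $\CC[\SU_{q_i}(2)/T_{n_i}]$ by \eqref{eq:CCDqTn}, as do their adjoints (with $\ell+j'-k=-m$), which are in the image since $\CCXq$ is a $*$-algebra. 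It then remains to check (i) that the values $m$ arising this way exhaust a cofinite subset of $n_i\ZZ_{\ge0}$, and (ii) that these monomials and their adjoints, for $m$ in such a set, generate $\CC[\SU_{q_i}(2)/T_{n_i}]$ as a $*$-algebra. Point (ii) is routine: a triangularity argument in powers of $\gamma\gamma^*$, using $\alpha\alpha^*=1-q_i^2\gamma\gamma^*$, first recovers the whole weight-zero Podle\'{s} subalgebra $\CC[\SU_{q_i}(2)/T]$, after which products with the monomials $\gamma^{j}\alpha^{m-j}$ sweep out every remaining weight space of $\CC[\SU_{q_i}(2)/T_{n_i}]$.

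\emph{Expected obstacle and an alternative.} Point (i) is the sticking point: one must check, uniformly in $q$, that the $\CC[K_q]$-invariant vectors of the $V_\lambda$ detect $\SU_{q_i}(2)$-highest-weight components of all the spins needed, and that the integer $n_i$ this produces coincides with the one coming from $\SU(2)_{\alpha_i}\cap K^{S,L}$. At $q=1$ this is simply the surjectivity of restriction of regular functions to the closed $\SU(2)_{\alpha_i}$-orbit $\cong\SU(2)/T_{n_i}$ inside the affine variety $\cX$, so a way to bypass the quantum representation-theoretic bookkeeping is a specialisation argument: both $\sigma_i(\CCXq)$ and $\CC[\SU_{q_i}(2)/T_{n_i}]$ are graded by $\SU_{q_i}(2)$-weight (the quantity $\ell+j-k$ of \eqref{eq:CCDqTn}) with finite-dimensional graded pieces of $q$-independent dimension, so the inclusion of the first part together with equality at $q=1$ forces equality for all $q$.
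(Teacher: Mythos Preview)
The paper's proof is a one-line citation: it invokes \cite[Corollary~2.4]{NesTus:functions} and records the explicit description $n_i\ZZ=\{(\mu,\alpha_i)\mid\mu\in L\}$. Your argument is an attempt to reprove that corollary by hand, so the approaches are genuinely different.

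Your ``$\subseteq$'' direction is correct and identifies the same $n_i$ as the paper. The problem is the ``$\supseteq$'' direction. You already flag point~(i) as a gap, and your proposed bypass via specialisation does not work as written: the grading by the right $\SU_{q_i}(2)$-weight $\ell+j-k$ has \emph{infinite}-dimensional graded pieces. For instance, the weight-zero component of $\CC[\SU_{q_i}(2)/T_{n_i}]$ contains all of $(\gamma\gamma^*)^j$, $j\ge0$. So the inclusion $\sigma_i(\CCXq)\subseteq\CC[\SU_{q_i}(2)/T_{n_i}]$ together with equality of graded dimensions at $q=1$ is not enough to force equality, because there is no dimension count to compare.

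A repair is possible but needs more than you have written. Replace the weight grading by the Peter--Weyl (left $\SU_{q_i}(2)$-isotypic) decomposition, whose pieces \emph{are} finite-dimensional; then argue that the branching of each $G_q$-irreducible $V_\lambda$ to $\SU_{q_i}(2)$, and the multiplicity of $K_q$-invariants in $V_\lambda^*$, are both $q$-independent (this is standard for generic $q$ but must be said), so that the isotypic multiplicities of $\sigma_i(\CCXq)$ coincide with those at $q=1$. Even then you still owe the classical surjectivity, and your phrase ``affine variety $\cX$'' is misleading: $G/K$ is not affine, and surjectivity of restriction of representative functions from $G/K$ to the closed suborbit $\SU(2)_{\alpha_i}/T_{n_i}$ is a Frobenius-reciprocity statement, not an affine-geometry triviality. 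All of this is exactly what \cite[Corollary~2.4]{NesTus:functions} packages, which is why the paper simply cites it.
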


\begin{proof}
This follows from \cite[Corollary 2.4]{NesTus:functions}.
Explicitly, $n_i$ is the generator of the subgroup of $\ZZ$ obtained by restricting the weights $\mu\in L$ to weights for $\cU_{q_i}(\lie{sl}_2)$, \emph{i.e.},
\(
  n_i\ZZ =   \{ (\mu,\alpha_i) \mid \mu\in L \} .
\)
\end{proof}

Let $T$ be the maximal torus of $G$, and $T_L \subseteq T$ the subgroup annihilated by $L\subseteq P = \widehat{T}$.
Let $W^S \subset W$ be the set of $w\in W$ such that $w(\alpha_i) >0$ for all $\alpha_i\in S$.

The irreducible representations of $\CCXq$ are indexed by pairs $(w,t) \in W^S\times T/T_L$.  These representations $\pi_{w,t}$
are described explicitly in the preamble to Theorem 2.2 of \cite{NesTus:functions}.  See also \cite[Theorem 5.9]{StoDij}.  We recall only that in the case where $w=s_i$ is the simple reflection in $\alpha_i\in S^c$, the associated representation is 
\begin{equation}
\label{eq:pi_s_i}
  \pi_{s_i,1} = \rho_q\circ\sigma_i : \CCXq \to \CC[\DD_{q_i}/T_{n_i}] \to \cB(\ell^2(\ZZ_+)).
\end{equation}

\section{Quantum $2$-cell representations}

\begin{definition}
An irreducible representation $\pi$ of $\CCXq$ will be called a \emph{$2$-cell representation} if the norm closure of the image is an extension of an abelian $C^*$-algebra by the compacts---that is, if it fits into an exact sequence of $C^*$-algebras
\[
 0 \to \cK \to \pi(C(\cX_q)) \to A \to 0
\]
with $A$ abelian and $\cK$ being the compacts on a separable infinite dimensional Hilbert space.
\end{definition}

\begin{example}
\label{ex:2-cell}
The standard representation $\rho_q$ of $\CC[\SU_q(2)/T_n]$  is a $2$-cell representation  for any $n\in\{1,2,\ldots,\infty\}$.  To see this, let $I$ denote the $*$-ideal of $\CC[\SU_q(2)]$ generated by $\gamma$ and put $I_n = I \cap \CC[\SU_q(2)/T_n]$.  Then $\overline{\rho_q(I_n)} = \cK(\ell^2({\ZZ_+}))$ since it contains $\rho_q(\alpha^*\gamma)$ which is a compact weighted shift with distinct nonzero weights.  
And putting $\gamma=0$ in the relations \eqref{eq:SUq2} shows that $\CC[SU_q(2)/T_n] / I_n$ is abelian.
\end{example}

\begin{lemma}
\label{lem:2-cell}
  The representation $\pi_{w,t}$ is a $2$-cell representation if and only if $w$ is a simple reflection, \emph{i.e.} $w=s_i$ for some $\alpha_i \in S^c$.
  \end{lemma}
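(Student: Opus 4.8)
The plan is to analyze the structure of the representation $\pi_{w,t}$ in terms of the known tensor-product decomposition from \cite{NesTus:functions}. Recall that if $w = s_{i_1}\cdots s_{i_\ell}$ is a reduced expression, then $\pi_{w,t}$ is (up to unitary equivalence and a twist by a character of the torus) the composition of $\CCXq \to \CCGq$ with a tensor product $\rho_{q_{i_1}}\sigma_{i_1} \otimes \cdots \otimes \rho_{q_{i_\ell}}\sigma_{i_\ell}$ of the one-dimensional-cell representations, acting on $\ell^2(\ZZ_+)^{\otimes\ell}$. The claim is that $\pi_{w,t}$ is a $2$-cell representation precisely when $\ell = \ell(w) = 1$.

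First I would prove the easy direction: if $w = s_i$ with $\alpha_i \in S^c$, then by \eqref{eq:pi_s_i} the representation $\pi_{s_i,1}$ is $\rho_q\circ\sigma_i$, and by Lemma \ref{lem:restrictions} its image is (the norm closure of) $\CC[\DD_{q_i}/T_{n_i}] = \leaf_q(\CC[\SU_{q_i}(2)/T_{n_i}])$; by Example \ref{ex:2-cell} this is a $2$-cell representation. For a general torus parameter $t$, the representation $\pi_{s_i,t}$ differs from $\pi_{s_i,1}$ only by composing with an automorphism coming from the $T/T_L$-action, which does not change the isomorphism class of the image $C^*$-algebra, so it is still a $2$-cell representation.

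Next, the harder direction: if $\ell(w) \geq 2$, then $\pi_{w,t}$ is not a $2$-cell representation. The idea is that the image contains "too much" to be a compact-by-abelian extension. Concretely, I would use the fact that for the tensor product representation on $\ell^2(\ZZ_+)^{\otimes\ell}$, the image $C^*$-algebra surjects onto a quotient obtained by sending each tensor factor to its abelian quotient $A_{i_j}$ (from Example \ref{ex:2-cell}), except one factor which is kept as a compact-by-abelian extension; iterating, one sees the image has a composition series of length $\ell$ whose subquotients include $\cK \otimes (\text{abelian})$ with a non-unital abelian part. In particular, when $\ell \geq 2$ the closed two-sided ideal of compact operators in $\pi_{w,t}(C(\cX_q))$ has a quotient by the compacts which is still not abelian — for instance, it contains a copy of a compact-by-abelian extension $\cK\otimes(\text{abelian})$ tensored with an abelian algebra — so $\pi_{w,t}(C(\cX_q))/\cK$ cannot be abelian. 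I would make this precise by identifying, inside the image, the ideal generated by $\pi_{w,t}(\sigma_{i_2}(\alpha^*\gamma))$ acting on the last tensor factor (when suitably nonzero), which generates $\mathrm{id}\otimes\cdots\otimes\mathrm{id}\otimes\cK$, and observing that the quotient by this ideal still contains a nontrivial copy of the compacts in the first factor, hence is noncommutative.

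The main obstacle will be the last direction: one must verify that the relevant elements really are nonzero in the tensor product representation and that the ideal structure behaves as expected — in particular, that $\sigma_{i_j}$ restricted to $\CCXq$ is genuinely nonzero for each $j$ appearing in a reduced word for $w$ (which should follow from $\alpha_{i_j}$ arising in a reduced word for some $w\in W^S$, forcing $\alpha_{i_j}\in S^c$, hence $\sigma_{i_j}(\CCXq)$ is one of the $\CC[\SU_{q_{i_j}}(2)/T_{n_{i_j}}]$ of Lemma \ref{lem:restrictions}, which is infinite-dimensional). Once that is in place, the ideal-theoretic argument is a routine unwinding of the tensor-product structure, and the only subtlety is bookkeeping the composition series to conclude that the Calkin-algebra image is noncommutative when $\ell \geq 2$.
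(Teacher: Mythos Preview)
Your treatment of the easy direction ($w=s_i$) and the reduction over the torus parameter $t$ match the paper exactly. The paper also disposes of $w=e$ in one line (the representation is the counit, hence one-dimensional), which you leave implicit.

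For $\ell(w)\geq 2$ the paper does not argue from scratch: it simply invokes \cite[Theorem~4.1]{NesTus:functions}, which asserts that $\pi_{w,1}(C(\cX_q))$ has a nontrivial nonabelian quotient. Since in an extension $0\to\cK\to B\to A\to 0$ with $\cK$ simple and essential and $A$ abelian every nontrivial quotient of $B$ factors through $A$, this immediately rules out the $2$-cell property. Your plan to exhibit such a quotient by collapsing tensor factors is the mechanism behind that theorem, so the strategies agree in spirit; the paper just cites rather than reproves.

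There is, however, a genuine error in your scaffolding. It is \emph{not} true that every simple reflection appearing in a reduced word for $w\in W^S$ lies in $S^c$. For instance, in type $A_2$ with $S=\{\alpha_1\}$, the element $w=s_1s_2$ belongs to $W^S$ (one checks $s_1s_2(\alpha_1)=\alpha_2>0$), yet its unique reduced word uses $s_1$. So Lemma~\ref{lem:restrictions} does not apply to the individual tensor factors, and your justification that each $\sigma_{i_j}(\CCXq)$ is one of the algebras $\CC[\SU_{q_{i_j}}(2)/T_{n_{i_j}}]$ breaks down. What is actually needed---and what \cite[Theorem~4.1]{NesTus:functions} supplies---is a quotient of $\pi_{w,1}(C(\cX_q))$ onto $\pi_{w',1}(C(\cX_q))$ for some shorter $w'\in W^S$, ultimately reaching a simple reflection in $S^c$; this requires the Bruhat order on $W^S$, not the letters of a single reduced word.
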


\begin{proof}
  The representation $\pi_{w,t}$ is the representation $\pi_{w,1}$ twisted by a character, so the image of $\pi_{t,w}(\CCXq)$ is independent of $t$.  Therefore, we may take $t=1$.  
  
  If $w=1$, then $\pi_{1,1}$ is the counit.
  
  If $w=s_i$ for $i\in S^c$, then by Lemma \ref{lem:restrictions} $\pi_{w,1}(\CCXq)   = \rho_{q_i}(\CC[\SU_{q_i}(2)/T_{n_i}])$, so $\pi_w$ is a $2$-cell representation by Example \ref{ex:2-cell}.
  
  If $w\in W^S$ has Bruhat length two or more, then \cite[Theorem 4.1]{NesTus:functions} shows that $\pi_{w,1}(C(\cX_q))$ has a nontrivial nonabelian quotient, so $\pi_{w,1}$ is not a $2$-cell representation.

\end{proof}

\section{Proof of the main theorem}

\begin{proof}[Proof of \Cref{theorem}]

By Lemmas  \ref{lem:Q}, \ref{lem:restrictions} and \ref{lem:2-cell}, the greatest value, less than one, of the commutator spectrum of the image of any $2$-cell representation of $\CCXq$ is $q^m$, for some $m\in\NN$ independent of $q$.  This is an invariant of $\CCXq$.
\end{proof}


\printbibliography

\end{document}